\documentclass[12pt]{amsart}
\usepackage{latexsym}
\usepackage{amsthm}
\usepackage{amsmath}
\usepackage{amsfonts}
\usepackage{amssymb}
\usepackage[dvips]{graphicx}
\usepackage{xypic}
\thispagestyle{plain}

\input xy
\xyoption{all}

\newtheorem{theo}{Theorem}[section]
\newtheorem{lemma}[theo]{Lemma}
\newtheorem{propo}[theo]{Proposition}

\newtheorem{coro}[theo]{Corollary}

\newcommand\Iso{\operatorname{Iso}}
\newcommand\op{\operatorname{op}}
\newcommand\id{\operatorname{id}}

\newcommand\Set{\operatorname{\bf Set}}
\newcommand\Gra{\operatorname{\bf Gra}}
\newcommand\Ho{\operatorname{Ho}}

\newcommand\ca{\mathcal {A}}
\newcommand\cb{\mathcal {B}}
\newcommand\cc{\mathcal {C}}
\newcommand\cd{\mathcal {D}}
\newcommand\cf{\mathcal {F}}

\newcommand\ck{\mathcal {K}}
\newcommand\cl{\mathcal {L}}
\newcommand\cm{\mathcal {M}}
\newcommand\crr{\mathcal {R}}

\newcommand\cw{\mathcal {W}}
\newcommand\cx{\mathcal {X}}

\date{June 26, 2009}
 
\begin{document}
\title[Cofibrantly generated model categories]
{Are all cofibrantly generated model categories combinatorial?}
\author[J. Rosick\'{y}]
{J. Rosick\'{y}$^*$}
\thanks{ $^*$ Supported by the Ministry of Education of the Czech republic under the project MSM 0021622409.} 
\address{\newline J. Rosick\'{y}\newline
Department of Mathematics and Statistics\newline
Masaryk University, Faculty of Sciences\newline
Kotl\' a\v rsk\' a 2, 60000 Brno, Czech Republic\newline
rosicky@math.muni.cz
}
\begin{abstract}
G. Raptis has recently proved that, assuming Vop\v enka's principle, every cofibrantly generated model category
is Quillen equivalent to a combinatorial one. His result remains true for a slightly more general concept
of a cofibrantly generated model category. We show that Vop\v enka's principle is equivalent to this claim.
The set-theoretical status of the original Raptis' result is open.
\end{abstract}
\keywords{cofibrantly generated model category, combinatorial model category, Vop\v enka's principle}

\maketitle
Combinatorial model categories were introduced by J. H. Smith as model categories which are locally presentable
and cofibrantly generated. There are of course cofibrantly generated model categories which are not combinatorial
-- the first example is the standard model category of topological spaces. This model category is Quillen equivalent
to the combinatorial model category of simplicial sets. G. Raptis \cite{R} has recently proved a somewhat surprising 
result saying that, assuming Vop\v enka's principle, every cofibrantly generated model category is Quillen equivalent 
to a combinatorial model category. Vop\v enka's principle is a set-theoretical axiom implying the existence of very 
large cardinals (see \cite{AR}). A natural question is whether Vop\v enka's principle (or other set theory) is needed 
for Raptis' result.

A \textit{model category} is a complete and cocomplete category $\cm$ together with three classes of
morphisms $\cf$, $\cc$ and $\cw$ called \textit{fibrations}, \textit{cofibrations} and \textit{weak 
equivalences} such that
\begin{enumerate}
\item[(1)] $\cw$ has the 2-out-of-3 property and is closed under retracts in the arrow category $\cm^\to$, and
\item[(2)] $(\cc,\cf\cap\cw)$ and $(\cc\cap\cw,\cf)$ are weak factorization systems.
\end{enumerate}
Morphisms from $\cf\cap\cw$ are called \textit{trivial fibrations} while morphisms from $\cc\cap\cw$
\textit{trivial cofibrations}. 

A \textit{weak factorization system} $(\cl,\crr)$ in a category $\cm$ consists of two classes $\cl$ and $\crr$ 
of morphisms of $\cm$ such that
\begin{enumerate}
\item[(1)] $\crr = \cl^{\square}$, $\cl = {}^\square \crr$, and
\item[(2)] any morphism $h$ of $\cm$ has a factorization $h=gf$ with
$f\in \cl$ and $g\in \crr$.
\end{enumerate}
Here, $\cl^{\square}$ consists of morphisms having the right lifting property w.r.t. each morphism from $\cl$ 
and ${}^\square\crr$ consists of morphisms having the left lifting property w.r.t. each morphism from $\crr$.

The standard definition of a cofibrantly generated model category (see \cite{Ho}) is that the both weak factorization 
systems from its definition are cofibrantly generated in the following sense. A weak factorization system $(\cl,\crr)$ 
is cofibrantly generated if there exists a set $\cx$ of morphisms such that 
\begin{enumerate}
\item[(1)] the domains of $\cx$ are small relative to $\cx$-cellular morphisms, and
\item[(2)] $\cx^\square=\crr$.
\end{enumerate}
Here, $\cx$-cellular morphisms are transfinite compositions of pushouts of morphisms of $\cx$. The consequence 
of this definition is that $\cl$ is the smallest cofibrantly closed class containing $\cx$. A cofibrantly closed
class is defined as a class of morphisms closed under transfinite compositions, pushouts and retracts in $\cm^\to$.
Moreover, one does not need to assume that $(\cl,\crr)$ is a weak factorization system because it follows from (1)
and (2). This observation led to the following  more general definition of a cofibrantly generated weak factorization
system (see \cite{AHRT}).

A weak factorization system $(\cl,\crr)$ is \textit{cofibrantly generated} if there exists a set $\cx$ of morphisms 
such that $\cl$ is the smallest cofibrantly closed class containing $\cx$. The consequence is that $\cx^\square=\crr$.
A model category is \textit{cofibrantly generated} if the both weak factorization systems from its definition 
are cofibrantly generated in the new sense. It does not affect the definition of a combinatorial model category
because all objects are small in a locally presentable category. Moreover, the proof of Raptis \cite{R} works
for cofibrantly generated model categories in this sense as well.

We will show that Vop\v enka's principle follows from the fact that every cofibrantly generated model category 
(in the new sense) is Quillen equivalent to a combinatorial model category. We do not know whether this is true
for standardly defined cofibrantly generated model categories as well. Our proof uses the trivial model structure
on a category $\cm$ where all morphisms are cofibrations and weak equivalences are isomorphisms.  

Given a small full subcategory $\ca$ of a category $\ck$, the \textit{canonical functor}
$$
E_\ca: \ck \to \Set^{\ca^{\op}}
$$
assigns to each object $K$ the restriction
$$
E_\ca K =\hom (-, K)\big/\ca^{op}
$$
of its hom-functor $\hom(-, K): \ck^{\op}\to \Set$ to $\ca^{\op}$ (see \cite{AR} 1.25). 

A small full subcategory $\ca$ of a category $\ck$ is called \textit{dense} provided that every object of $\ck$
is a canonical colimit of objects from $\ca$. It is equivalent to the fact that the canonical functor
$$
E_\ca: \ck \to \Set^{\ca^{\op}}
$$
is a full embedding (see \cite{AR}, 1.26). A category $\ck$ is called \textit{bounded} if it has a (small) dense 
subcategory (see \cite{AR}). 

Dense subcategories were introduced by J. R. Isbell \cite{I} and called left adequate subcategories. The following
result is easy to prove and can be found in \cite{I}.

\begin{lemma}\label{le0.1}
Let $\ca$ be dense subcategory of $\ck$ and $\cb$ a small full subcategory of $\ck$ containing $\ca$. Then $\cb$
is dense.
\end{lemma}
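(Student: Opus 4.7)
The plan is to verify the definition directly: show that for every object $K$ of $\ck$, the object $K$ together with the obvious cocone is the colimit of the canonical diagram $D_\cb : (\cb\downarrow K) \to \cb \hookrightarrow \ck$. Since $\ca \subseteq \cb$ is a full subcategory, restriction along the inclusion $(\ca\downarrow K)\hookrightarrow (\cb\downarrow K)$ turns any cocone on $D_\cb$ into a cocone on the canonical diagram $D_\ca$ for $\ca$, and the density of $\ca$ gives a unique factorization of the latter through $K$. The only real content of the lemma is to verify that this factorization also mediates the original cocone on $D_\cb$.

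Concretely, given a cocone $(c_{(B,b)} \colon B \to X)_{(B,b)\in \cb\downarrow K}$, I would restrict to $\ca\downarrow K$ and invoke density of $\ca$ to obtain a unique morphism $f\colon K\to X$ with $f\circ a = c_{(A,a)}$ for every $(A,a)\in \ca\downarrow K$. To check $f\circ b = c_{(B,b)}$ for a general $(B,b)\in \cb\downarrow K$, I would apply density of $\ca$ once more \emph{to $B$ itself}: $B$ is the colimit of the canonical diagram $(\ca\downarrow B)\to \ca\hookrightarrow \ck$. For each $(A,a)\in \ca\downarrow B$, the morphism $a\colon A\to B$ is itself a morphism $(A,b\circ a)\to(B,b)$ in $\cb\downarrow K$, so the cocone condition forces
\[
c_{(B,b)}\circ a \;=\; c_{(A,b\circ a)} \;=\; f\circ b\circ a.
\]
As this identity holds for every $a$ in the colimiting cocone exhibiting $B$, we conclude $c_{(B,b)} = f\circ b$, as required. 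Uniqueness of $f$ is automatic: any two factorizations agree on all $a\colon A\to K$ with $A\in\ca$, hence coincide by density of $\ca$.

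I do not anticipate a genuine obstacle; the argument is essentially bookkeeping with cocones, and uses density of $\ca$ twice — once to produce $f$ from the restricted cocone, and once, applied to each $B\in\cb$, to propagate the identity from $\ca$ to all of $\cb$. The only point that requires care is recognising that a morphism $a\colon A\to B$ in $\ca\subseteq\cb$ does canonically lift to a morphism $(A,b\circ a)\to (B,b)$ in $\cb\downarrow K$ so that naturality of the cocone can be used.
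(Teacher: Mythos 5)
Your argument is correct: restricting a cocone over the canonical $\cb$-diagram to $\ca\downarrow K$ yields the mediating morphism $f$, and the identity $f\circ b=c_{(B,b)}$ follows because the canonical cocone $(a\colon A\to B)_{(A,a)\in\ca\downarrow B}$ is a colimit cocone, hence jointly epimorphic — exactly the point your second use of density supplies. The paper gives no proof of this lemma (it is cited to Isbell as easy), and your verification is the standard one, matching what that reference does up to translating between the colimit formulation of density and the fullness of the canonical functor.
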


\begin{propo}\label{prop0.2}
Let $\ck$ be a cocomplete bounded category. Then $(\ck,\Iso)$ is a cofibrantly generated weak factorization system.
\end{propo}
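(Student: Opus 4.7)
The pair $(\mor\ck,\Iso)$ is manifestly a weak factorization system: every morphism $h$ factors trivially as $h=\id\circ h$, any square with an isomorphism on the right admits a (unique) diagonal filler, and conversely any $g\in\mor\ck^{\square}$ is an isomorphism (fill the square with identities on top and bottom, and $g$ on both sides). The content is therefore to exhibit a \emph{set} $\cx$ of morphisms whose cofibrantly closed closure $\overline{\cx}$ is all of $\mor\ck$.

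Since $\ck$ is bounded, fix a small dense subcategory and, by iterating Lemma~\ref{le0.1}, enlarge it to a still small, still dense $\ca$ that contains an initial object $0$ and is closed under binary coproducts in $\ck$; hence $0\to A$ and the codiagonal $\nabla_A\colon A\sqcup A\to A$ both lie in $\mor\ca$ for every $A\in\ca$. I propose to take $\cx:=\mor\ca$. The identity $\cx^{\square}=\Iso$ is then quick: for any $g\colon Z\to Y$ in $\cx^{\square}$ and any $A\in\ca$, lifting against $0\to A$ shows every morphism $A\to Y$ factors through $g$, and lifting against $\nabla_A$ shows two morphisms $A\to Z$ with equal post-composition with $g$ must coincide, so $\hom(A,Z)\to\hom(A,Y)$ is a bijection. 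By density of $\ca$, $E_\ca g$ is then an isomorphism in $\Set^{\ca^{\op}}$, and since $E_\ca$ is a full embedding this forces $g$ to be an isomorphism in $\ck$.

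The main step is to show $\overline{\cx}=\mor\ck$. Given an arbitrary $f\colon X\to Y$, set $\cb:=\ca\cup\{X\}$, still small and still dense by Lemma~\ref{le0.1}, and write $Y$ as its canonical colimit $Y=\mathrm{colim}_{(B,b)\in\cb\downarrow Y}B$. I construct $f$ explicitly as a transfinite composition of pushouts of morphisms of $\cx$, starting from $Y_0:=X$. First, successively pushout $0\to A$ for each $(A,\beta)\in\ca\downarrow Y$ (so with $A$ ranging over $\ca$ only, not over $\cb$) to reach the wedge $Y_\infty:=X\sqcup\coprod_{(A,\beta)}A$ together with its canonical map to $Y$. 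Then, for each morphism in $\cb\downarrow Y$ whose source lies in $\ca$, pushout the appropriate codiagonal $\nabla_A\in\cx$ to impose the corresponding identification. Call the result $Q$; the universal property of colimits supplies a natural map $Q\to Y$ factoring the induced map from $Y_\infty$.

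The main obstacle is to verify that $Q\to Y$ is an isomorphism, i.e.\ that the identifications I \emph{omitted}---those coming from morphisms in $\cb\downarrow Y$ with source $X$, which would require pushing out $\nabla_X\notin\cx$---are automatic consequences of the ones imposed. I will construct the inverse $Y\to Q$ by exhibiting a cocone over $\cb\downarrow Y$ with apex $Q$. The cocone condition for a morphism $h\colon X\to A$ (or $h\colon X\to X$) in $\cb\downarrow Y$ says that two particular maps $X\rightrightarrows Q$ agree; since $X=\mathrm{colim}_{(A',a')\in\ca\downarrow X}A'$ by density of $\ca$, it suffices to check agreement after pre-composing with each $a'\colon A'\to X$. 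This follows because the codiagonal relations already in place identify $\iota_X\circ a'$ with the summand indexed by $(A',fa')\in\ca\downarrow Y$, and the same reasoning applies to the composite through $h$. This yields $Q\cong Y$ and shows $f\in\overline{\cx}$, completing the proof.
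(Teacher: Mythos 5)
Your route is genuinely different from the paper's, and its core idea is sound. The paper transfers cofibrant generation along the left adjoint $F$ of the canonical functor $E_\ca\colon\ck\to\Set^{\ca^{\op}}$, quoting that $(\Set^{\ca^{\op}},\Iso)$ is cofibrantly generated by some set $\cx$ of presheaf morphisms, so that every morphism of $\ck$ is a retract of an $F(\cx)$-cellular morphism; you instead build the cell structure directly in $\ck$ from the set $\mor\ca$ (with $\ca$ enlarged to contain $0$ and binary coproducts), which is more self-contained and even avoids retracts at the cost of more bookkeeping. Your verification that $(\mor\ca)^\square=\Iso$ is correct (for the definition used in the paper it is anyway a consequence rather than a requirement), and your key insight --- that the identifications which would require $\nabla_X$ are automatic because $X$ is itself a canonical colimit over $\ca\downarrow X$ and canonical (colimit) cocones are jointly epimorphic --- is exactly the right one.

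The execution, however, has two concrete loose ends, both caused by the objects $(X,b)$ of $\cb\downarrow Y$ with $b\neq f$. First, ``each morphism in $\cb\downarrow Y$ whose source lies in $\ca$'' includes morphisms $h\colon(A,bh)\to(X,b)$ with $b\neq f$; since $Y_\infty$ contains only one copy of $X$, sitting over $Y$ via $f$, the ``corresponding identification'' has no referent, and if it is read as gluing $\iota_{(A,bh)}$ to $\iota_X\circ h$ it is incompatible with the cocone to $Y$ (the map $Q\to Y$ exists only if every imposed relation satisfies $\beta=fh$). You must impose identifications only for morphisms with target in $\ca\downarrow Y$ or target $(X,f)$; fortunately these suffice. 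Second, your inverse is presented as a cocone over all of $\cb\downarrow Y$, but its components at $(X,b)$ for $b\neq f$ are never defined, and having maps in both directions does not yet give $Q\cong Y$: the composites must be checked. The cleanest repair drops $\cb$ from this step entirely: the family $(\iota_{(A,\beta)})_{(A,\beta)\in\ca\downarrow Y}$ is a cocone by the imposed relations, hence induces $\psi\colon Y\to Q$ by density of $\ca$; then $\phi\psi=\id_Y$ by uniqueness of maps out of the canonical colimit $Y$, while $\psi\phi=\id_Q$ follows since $Y_\infty\to Q$ is an epimorphism, the check on the summands $(A,\beta)$ is immediate, and the check on the summand $X$, namely $\psi f=\iota_X$, is precisely your computation: precompose with the jointly epimorphic family $a'\colon A'\to X$, $(A',a')\in\ca\downarrow X$, and use the imposed relation $\iota_X a'=\iota_{(A',fa')}$. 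With these repairs your argument is complete and proves the proposition without invoking the presheaf-category result used in the paper.
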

\begin{proof}
Clearly, $(\ck,\Iso)$ is a weak factorization system. The canonical functor
$$
E_\ca: \ck \to \Set^{\ca^{\op}}
$$
has a left adjoint $F$ (see \cite{AR}, 1.27). The weak factorization system $(\Set^{\ca^{\op}},\Iso)$ in $\Set^{\ca^{\op}}$
is cofibrantly generated (see \cite{Ro1}, 4.6). Thus there is a small full subcategory $\cx$ of $\Set^{\ca^{\op}}$ such
that each morphism in $\Set^{\ca^{\op}}$ is a retract of a $\cx$-cellular morphism. Hence each morphism in $\ck$ is
a retract of a $F(\cx)$-cellular morphism. Thus $(\ck,\Iso)$ is cofibrantly generated.
\end{proof}

Given a complete and cocomplete category $\ck$, the choice $\cc=\ck$ and $\cw=\Iso$ yields a model category structure
on $\ck$. The corresponding two weak factorization systems are $(\ck,\Iso)$ and $(\Iso,\ck)$ and the homotopy category
$\Ho(\ck)=\ck$. We will call this model category structure \textit{trivial}. 

\begin{coro}\label{cor0.3}
Let $\ck$ be a complete, cocomplete and bounded category. Then the trivial model category structure on $\ck$ 
is cofibrantly generated.
\end{coro}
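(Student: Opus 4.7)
The plan is to unfold the definition of the trivial model structure into its two constituent weak factorization systems, then cofibrantly generate each one separately. As noted immediately before the statement, the two systems are $(\ck,\Iso)$ and $(\Iso,\ck)$, so the corollary reduces to showing that each of these is cofibrantly generated in the sense of \cite{AHRT}.

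For the first system $(\ck,\Iso)$, I would simply invoke Proposition \ref{prop0.2}: the hypotheses on $\ck$ (cocomplete and bounded) are exactly what that proposition needs, and its conclusion is precisely cofibrant generation of $(\ck,\Iso)$. So this half is immediate.

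For the second system $(\Iso,\ck)$, I would exhibit a very small generating set by hand. Since $\ck$ is cocomplete it has an initial object $0$, and I would take $\cx=\{\id_0\}$. The key observation is that for any object $Y$ the unique morphism $0\to Y$ has as its pushout of $\id_0$ the identity $\id_Y$, so every identity morphism lies in the cofibrantly closed class generated by $\cx$; taking retracts in $\ck^{\to}$ then produces every isomorphism. Conversely, $\Iso$ is plainly closed under transfinite composition, pushout, and retracts, so it equals the smallest cofibrantly closed class containing $\cx$. Hence $(\Iso,\ck)$ is cofibrantly generated.

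There is no genuine obstacle here; the only potential snag is bookkeeping about whether identities are forced into every cofibrantly closed class by convention or must be seeded explicitly, and choosing $\cx=\{\id_0\}$ sidesteps the question. Combining the two halves yields that the trivial model structure on $\ck$ is cofibrantly generated, which is the assertion of Corollary \ref{cor0.3}.
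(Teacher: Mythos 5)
Your proposal is correct and follows essentially the same route as the paper: the paper's proof likewise invokes Proposition \ref{prop0.2} for $(\ck,\Iso)$ and observes that $(\Iso,\ck)$ is cofibrantly generated by $\cx=\{\id_O\}$ with $O$ the initial object. You merely spell out the verification (pushouts of $\id_O$ give identities, retracts give all isomorphisms) that the paper leaves implicit.
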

\begin{proof}
Following \ref{prop0.2}, it suffices to add that the weak factorization system $(\Iso,\ck)$ is cofibrantly generated
by $\cx=\{\id_O\}$ where $O$ is an initial object of $\ck$.
\end{proof}

\begin{theo}\label{th0.4}
Vop\v enka's principle is equivalent to the fact that every cofibrantly generated model category is Quillen equivalent
to a combinatorial model category. 
\end{theo}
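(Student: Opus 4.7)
The forward direction is Raptis' theorem \cite{R}, whose argument, as noted in the introduction, applies to the generalized definition of cofibrant generation. My plan for the converse is contrapositive: assuming Vop\v enka's principle fails, I construct a cofibrantly generated model category that is \emph{not} Quillen equivalent to any combinatorial one.

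The input from set theory is a theorem of Ad\'amek and Rosick\'y (see \cite{AR}) which characterizes Vop\v enka's principle as the assertion that every cocomplete category with a small dense subcategory is locally presentable. Its failure therefore supplies a complete, cocomplete, bounded category $\ck$ that is not locally presentable. Corollary \ref{cor0.3} then says that the trivial model structure on $\ck$ is cofibrantly generated, and this model category is the intended counterexample.

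The main task is to show that this trivial model structure is not Quillen equivalent to any combinatorial model category $\cm$. Suppose for contradiction that $F : \ck \rightleftarrows \cm : G$ is a Quillen equivalence. Since every object of $\ck$ is both cofibrant and fibrant and the weak equivalences are exactly the isomorphisms, the derived adjunction yields an equivalence of $1$-categories $\ck = \Ho(\ck) \simeq \Ho(\cm)$ and, more strongly, an equivalence of the underlying simplicial localizations (equivalently, of the associated $(\infty,1)$-categories). The $(\infty,1)$-localization of a combinatorial model category is a presentable $(\infty,1)$-category (Dugger--Lurie), whereas the $(\infty,1)$-localization of the trivial structure on $\ck$ is just $\ck$ regarded as a $1$-category with discrete mapping spaces. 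A presentable $(\infty,1)$-category whose mapping spaces are discrete is exactly a locally presentable $1$-category, so $\ck$ would be locally presentable---contradicting the choice of $\ck$.

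The principal obstacle I anticipate is making this final step purely $1$-categorical, as the paper itself presumably would. The natural route is to take a small dense subcategory $\ca \subseteq \ck$, push it through $F$ followed by a functorial fibrant replacement to land in the $\lambda$-presentable objects of $\cm$ for some fixed $\lambda$ (possible because $\cm$ is combinatorial and $F(\ca)$ is a set), and then verify that applying $G$ yields a strong generator of $\ck$ consisting of $\lambda$-presentable objects. The delicate verification is that $\lambda$-presentability transfers to $\ck$: this uses that $G$ is a right adjoint, that every object of $\ck$ is a canonical colimit of objects from $\ca$, and that the Quillen-equivalence condition forces the hom-set $\ck(X,X')$ to agree with $\cm(FX,RFX')$ without further quotient by homotopy, so that filtered colimits on the $\ck$-side are faithfully reflected by filtered colimits in $\cm$ of $\lambda$-presentable objects.
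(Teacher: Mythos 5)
Your main argument is correct, but it takes a genuinely different route from the paper. You use only that the negation of Vop\v enka's principle yields a complete, cocomplete, bounded category $\ck$ that is not locally presentable (the example of \cite{AR}, 6.12, via the characterization in \cite{AR}, 6.14), and you exclude a Quillen equivalence with a combinatorial $\cm$ by passing to underlying $(\infty,1)$-categories: a Quillen equivalence induces an equivalence of simplicial localizations (Dwyer--Kan), the localization of a combinatorial model category is presentable (Dugger, Lurie), the localization of the trivial structure is $\ck$ itself with discrete mapping spaces, and a presentable $\infty$-category that is a $1$-category is precisely a locally presentable category. These imported theorems are all true, so the argument stands; what it buys is conceptual economy and generality (any bounded, complete, cocomplete, non-locally-presentable $\ck$ works). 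The paper instead stays strictly $1$-categorical and uses more than non-presentability: it exploits the specific properties (1) and (2) of the category of \cite{AR}, 6.12 --- the $\lambda$-filtered diagrams $D_\lambda$ whose only compatible cocones have codomain $1$, with $\id_1$ factoring through no component. These diagrams are rectified through the left Quillen functor, and the key technical input is that the fibrant--cofibrant replacement functor of a combinatorial model category preserves $\lambda_0$-filtered colimits for some $\lambda_0$ (\cite{D}, \cite{Ro}); the $\mu$-presentability of $RM_{\lambda_0}$ then forces $\id_1$ to factor through a component of $\delta_\mu$, contradicting (2). This elementary mechanism is what lets the paper avoid the $\infty$-categorical apparatus entirely; note also (compare Proposition \ref{prop0.4}) that both your proof and the paper's make essential use of the Quillen equivalence rather than a mere equivalence of homotopy categories --- yours to obtain the equivalence of localizations, the paper's to rectify the diagrams $D_\lambda$.

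One caution about your closing paragraph: the proposed $1$-categorical reduction contains an unjustified step, namely that the Quillen equivalence makes $\ck(X,X')$ agree with $\cm(FX,RFX')$ \emph{without} quotienting by homotopy; hom-sets in $\Ho\cm$ are homotopy classes, and nothing forces that relation to be trivial. Since your main argument does not depend on this sketch, correctness is unaffected, but a genuinely $1$-categorical version would have to proceed along different lines --- essentially those of the paper.
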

\begin{proof}
Necessity follows from \cite{R}. Under the negation of Vop\v enka's principle, \cite{AR}, 6.12 presents a complete 
bounded category $\ca$ with
the following properties
\begin{enumerate}
\item[(1)] For each regular cardinal $\lambda$, there is a $\lambda$-filtered diagram $D_\lambda:\cd_\lambda\to\ck$
whose only compatible cocones $\delta_\lambda$ are trivial ones with the codomain 1 (= a terminal object),
\item[(2)] For each $\lambda$, $\id_1$ does not factorize through any component of $\delta_\lambda$.
\end{enumerate}
Since, following (1), $\delta_\lambda$ is a colimit cocone for each $\lambda$, (2) implies that $1$ is not
$\lambda$-presentable for any regular $\lambda$. Condition (2) is not stated explicitly in \cite{AR} but
it follows from the fact that there is no morphism from $1$ to a non-terminal object of $\ca$. In fact,
$\ca$ is the full subcategory of the category $\Gra$ consisting of graphs $A$ without any morphism $B_i\to A$
where $B_i$ is the rigid class of graphs indexed by ordinals (whose existence is guaranteed by the negation
of Vop\v enka's principle). The existence of a morphism $1\to A$ means the presence of a loop in $A$ and,
consequently, the existence of a constant morphism $B_i\to A$ (having a loop as its value).

Assume that the trivial model category $\ca$ is Quillen equivalent to a combinatorial model category $\cm$. 
Since $\Ho\cm$ is equivalent to $\ca$, it shares properties (1) and (2). Moreover, since $\Ho\ck=\ck$, 
the diagrams $D_\lambda$ are diagrams in $\ck$. It follows from the definition of Quillen equivalence
that the corresponding diagrams in $\Ho\cm$ (we will denote them by $D_\lambda$ as well) can be rectified.
It means that there are diagrams $\overline{D}_\lambda$ in $\cm$ such that $D_\lambda=P\overline{D}_\lambda$;
here, $P:\cm\to\Ho\cm$ is the canonical functor. Following \cite{D} and \cite{Ro}, there is a regular cardinal 
$\lambda_0$ such that the replacement functor $R:\cm\to\cm$ preserves $\lambda_0$-filtered colimits. $R$ sends 
each object $M$ to a fibrant and cofibrant object and the canonical functor $P$ can be taken as the composition 
$QR$ where $Q$ is the quotient functor identifying homotopy equivalent morphisms. 

Let 
$$
(\overline{\delta}_{\lambda d}:\overline{D}_\lambda d\to M_\lambda)_{d\in\cd_\lambda}
$$
be colimit cocones. Then
$$
(R\overline{\delta}_{\lambda d}:R\overline{D}_\lambda d\to RM_\lambda)_{d\in\cd_\lambda}
$$
are colimit cocones for each $\lambda>\lambda_0$. Following (1), $RM_\lambda\cong 1$ for each $\lambda>\lambda_0$. 
The object $RM_{\lambda_0}$ is $\mu$-presentable in $\cm$ for some regular cardinal $\lambda_0<\mu$. Since
$RM_{\lambda_0}$ and $RM_\mu$ are homotopy equivalent, there is a morphism $f:RM_{\lambda_0}\to RM_\mu$. Since
$f$ factorizes through some $R\overline{\delta}_{\mu d}$, $\id_1$ factorizes through some component 
of $\delta_\mu$, which contradicts (2).
\end{proof}

While the weak factorization system $(\Iso,\ck)$ is cofibrantly generated in the sense of \cite{Ho}, it is not true
for $(\ck,\Iso)$ because the complete, cocomplete and bounded category in \cite{AR}, 6.12 is not locally
presentable just because it contains a non-presentable object. Thus we do not know whether Vop\v enka's principle
follows from the original result from \cite{R}.  

The proof above does not exclude that $\ca$ \textit{has a combinatorial model}, i.e., that there is a combinatorial
model category $\cm$ such that $\ca$ is equivalent to $\Ho\cm$.

\begin{propo}\label{prop0.4} Assume the existence of a proper class of compact cardinals and let $\ck$ be a complete, 
cocomplete and bounded category. Then the trivial model category $\ck$ has a combinatorial model if and only if $\ck$ 
is locally presentable.
\end{propo}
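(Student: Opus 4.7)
The direction \emph{locally presentable $\Rightarrow$ combinatorial model} is immediate: a locally presentable category is bounded, so Corollary~\ref{cor0.3} gives a cofibrantly generated trivial model structure; the locally presentable ambient makes this structure combinatorial, and as $\Ho\ck=\ck$ the category $\ck$ serves as its own combinatorial model.

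For the converse, assume $\ck\simeq\Ho\cm$ for a combinatorial model category $\cm$, with canonical functor $P:\cm\to\Ho\cm\simeq\ck$. Since $\ck$ is bounded, choose a small dense subcategory $\ca\subseteq\ck$; my plan is to produce a regular cardinal $\lambda$ such that every object of $\ca$ is $\lambda$-presentable in $\ck$, from which local $\lambda$-presentability of $\ck$ follows via the standard characterization by cocompleteness plus a dense generator of $\lambda$-presentable objects (\cite{AR}, 1.20). For each $A\in\ca$, pick a fibrant and cofibrant $\tilde A\in\cm$ with $P\tilde A\cong A$ and a cardinal $\lambda_A$ with $\tilde A$ being $\lambda_A$-presentable in $\cm$ (available because $\cm$ is locally presentable). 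Using \cite{D} and \cite{Ro}, fix $\lambda_0$ above $\sup_{A\in\ca}\lambda_A$ for which the fibrant and cofibrant replacement functors preserve $\lambda_0$-filtered colimits and for which cylinder objects on fibrant and cofibrant objects can be chosen $\lambda_0$-presentably; then use the proper-class hypothesis to fix a compact cardinal $\lambda\geq\lambda_0$.

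To verify $\lambda$-presentability of $A\in\ca$, take a $\lambda$-filtered diagram $D:\cj\to\ck$ with colimit $L$. Following the rectification step from the proof of Theorem~\ref{th0.4}, lift $D$ to a diagram $\overline D:\cj\to\cm$ and replace it levelwise by fibrant and cofibrant objects; let $\overline L$ be its colimit in $\cm$. Because $R$ preserves $\lambda$-filtered colimits, $P\overline L$ represents $L$ in $\ck$. The $\lambda$-presentability of $\tilde A$ in $\cm$ gives $\cm(\tilde A,\overline L)=\operatorname{colim}_\cj\cm(\tilde A,\overline D(j))$, and since homotopy of maps out of $\tilde A$ is encoded by a $\lambda$-presentable cylinder object, passing to homotopy classes commutes with this $\lambda$-filtered colimit. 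Combining yields $\ck(A,L)=\operatorname{colim}_\cj\ck(A,D(j))$, so $A$ is $\lambda$-presentable. The main obstacle is precisely this final commutation: computing $\lambda$-filtered colimits in $\Ho\cm$ through rectification and replacement (an accessibility issue for the functors of \cite{D} and \cite{Ro}) and making the homotopy equivalence relation commute with those colimits (an accessibility issue for cylinder objects) must happen at the same $\lambda$, uniformly across all of $\ca$. The proper class of compact cardinals is what absorbs these independently accessible pieces of data into a single $\lambda$ whose strong closure properties make the commutation arguments go through; under mere regularity of $\lambda$ one would be stuck coordinating the accessibility bounds at the quotient step.
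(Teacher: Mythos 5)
Your easy direction is fine, but the converse has two genuine gaps, both at the point where you pass between $\ck\simeq\Ho\cm$ and $\cm$. First, the rectification step you borrow from Theorem~\ref{th0.4} is not available here: in that proof the diagrams $D_\lambda$ already live strictly in the trivial model category and are transported along a \emph{Quillen} equivalence (whose derived functors are computed by honest functors, since every object of the trivial structure is fibrant and cofibrant). In Proposition~\ref{prop0.4} the hypothesis is only that $\ck$ is \emph{equivalent} to $\Ho\cm$ as a category --- the paper introduces ``has a combinatorial model'' precisely to drop the Quillen equivalence --- so an arbitrary $\lambda$-filtered diagram $\cj\to\Ho\cm$ has no reason to lift to a strict diagram in $\cm$; this is the usual homotopy-coherence obstruction. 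Second, even granting a lift $\overline D$, your claim that $P\overline L$ ``represents $L$ in $\ck$'' is unjustified: $P$ does not preserve colimits, so the colimit of $R\overline D$ in $\cm$ only yields a compatible cocone in $\Ho\cm$, not a colimit cocone. In Theorem~\ref{th0.4} this very point is finessed not by a general principle but by property (1) of the special category from \cite{AR}, 6.12 (every compatible cocone over $D_\lambda$ is trivial with codomain $1$), which is unavailable for a general bounded $\ck$. Consequently your computation $\ck(A,L)=\operatorname{colim}_\cj\ck(A,D(j))$, and hence the $\lambda$-presentability of the objects of $\ca$, does not follow; the relation between colimits in $\ck$ and colimits in $\cm$ is exactly the missing ingredient, and your appeal to compact cardinals to ``absorb'' the accessibility bounds is not an argument that supplies it.

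The paper's proof avoids computing any colimit in $\ck$ at all. Using \cite{Ro}, 4.1 it chooses a regular $\lambda$ with the dense $\cx\subseteq P(\cm_\lambda)$ and such that the composite $H=E_{P(\cm_\lambda)}\cdot P:\cm\to\Set^{P(\cm_\lambda)^{\op}}$ preserves $\lambda$-filtered colimits --- here both colimits are taken in honest categories ($\cm$ and presheaves), so no rectification or preservation statement about $P$ alone is needed. By Lemma~\ref{le0.1} the subcategory $P(\cm_\lambda)$ is dense in $\ck$, so $E_{P(\cm_\lambda)}$ is a full embedding and $\ck$ is the full image of the accessible functor $H$. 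The compact-cardinal hypothesis is then consumed exactly once, in the cited Corollary of Theorem 2 of \cite{Ro2}, which makes such a full image locally presentable. If you want to salvage your approach, you would in effect have to reprove that full-image theorem, since showing that a single $\lambda$ makes all of $\ca$ presentable in $\ck$ is essentially equivalent to the conclusion.
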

\begin{proof}
If $\ck$ is locally presentable the trivial model category $\ck$ is combinatorial. Assume that the trivial model 
category $\ck$ is equivalent to $\Ho\cm$ where $\cm$ is a combinatorial model category. Let $\cx$ be a dense 
subcategory of $\ck$. Following \cite{Ro}, 4.1, there is a regular cardinal $\lambda$ such that 
\begin{enumerate}
\item[(1)] $\cx\subseteq P(\cm_\lambda)$ where $\cm_\lambda$ denotes the full subcategory of $\cm$ consisting
of $\lambda$-presentable objects,
\item[(2)] The composition $H=E_{P(\cm_\lambda)}\cdot P$ preserves $\lambda$-filtered colimits.
\end{enumerate}
Since $P(\cm_\lambda)$ is dense in $\ck$ (see \ref{le0.1}), $E_{P(\cm_\lambda)}$ is a full embedding. Hence $\ck$ is 
the full image of the functor $H$, i.e., the full subcategory on objects $H(M)$ with $M$ in $\cm$. Following
\cite{Ro2}, Corollary of Theorem 2, $\ck$ is locally presentable.
\end{proof}

Vop\v enka's principle is stronger than the existence of a proper class of compact cardinals. Thus, assuming
the negation of Vop\v enka's principle but the existence of a proper class of compact cardinals, there is
a cofibrantly generated model category without a combinatorial model.

\end{document}